\newtheorem{theorem}{Theorem}[section]
\newtheorem{lemma}[theorem]{Lemma}
\newtheorem{proposition}[theorem]{Proposition}
\newtheorem{corollary}[theorem]{Corollary}
\theoremstyle{definition}
\numberwithin{equation}{section}
\begin{document}

\baselineskip=15.5pt

\title[Equivariant bundles and adapted connections]{Equivariant bundles and adapted 
connections}

\author[I. Biswas]{Indranil Biswas}

\address{School of Mathematics, Tata Institute of Fundamental
Research, Homi Bhabha Road, Mumbai 400005, India}

\email{indranil@math.tifr.res.in}

\author[A. Paul]{Arjun Paul}

\address{School of Mathematics, Tata Institute of Fundamental 
Research, Homi Bhabha Road, Mumbai 400005, India} 

\email{apmath90@math.tifr.res.in}

\author[A. Saha]{Arideep Saha}

\address{School of Mathematics, Tata Institute of Fundamental 
Research, Homi Bhabha Road, Mumbai 400005, India} 

\email{arideep@math.tifr.res.in}

\subjclass[2010]{32M10, 53B15, 37F75.}

\keywords{Equivariant bundle, $G$--connection, adapted connection, holomorphic foliation.}

\begin{abstract}
Given a complex manifold $M$ equipped with a holomorphic action of a connected complex
Lie group $G$, and a holomorphic principal $H$--bundle $E_H$ over $X$ equipped with a 
$G$--connection $h$, we investigate the connections on the principal $H$--bundle $E_H$ 
that are (strongly) adapted to $h$. Examples are provided by holomorphic principal 
$H$--bundles equipped with a flat partial connection over a foliated manifold.
\end{abstract}

\maketitle

\section{Introduction}

Let $X$ be a complex manifold, $G$ a connected complex Lie group and
$\rho\, :\, G\times X\, \longrightarrow\, X$ a holomorphic action of $G$ on $X$.
The Lie algebra of $G$ is denoted by $\mathfrak g$.
Let $p\, :\, E_H\, \longrightarrow\, X$ be a holomorphic principal $H$--bundle, where
$H$ is a complex Lie group. A $G$--connection on $E_H$ is a $\mathbb C$--linear
map $h\, :\, {\mathfrak g}\, \longrightarrow\, H^0(E_H, \, TE_H)^H$ such that for
every $v\, \in\, \mathfrak g$, the vector field $dp\circ h(v)$ on $X$ coincides with the
one defined by $v$ using the above action $\rho$ (see Section \ref{se2.1}). In \cite{BP},
$G$--connections were investigated, in particular, a criterion was given for the
existence of a $G$--connection.

Here we continue the investigations of $G$--connections. More precisely, we study the 
interactions of $G$--connections on $E_H$ with the holomorphic connections on the 
principal $H$--bundle $E_H$. There are two possible compatibility conditions between 
them which are called ``adapted'' and ``strongly adapted'' (see Section \ref{se3.1}). 
To explain these conditions, if $h$ is given by a holomorphic action $\rho_E$ of $G$ 
on $E_H$, then a holomorphic connection $\eta$ on the principal
$H$--bundle $E_H$ is adapted to $h$ if and only 
if $\eta$ is preserved by $\rho_E$; such an adapted connection $\eta$ is called strongly 
adapted if the image of the homomorphism $h$ is contained in the horizontal subbundle 
of $TE_H$ for the connection $\eta$.

The property of a holomorphic connection $\eta$ on a holomorphic principal $H$--bundle 
$E_H$ that it is strongly adapted to a $G$--connection $h$ on $E_H$ can also be 
formulated in the context of foliated manifolds and principal $H$--bundles on them equipped 
with a flat partial connection; the details are in Section \ref{se-f}.

\section{Preliminaries}

\subsection{Atiyah bundle}

Let $H$ be a complex Lie group. Its Lie algebra will be denoted by
$\mathfrak h$. Let $X$ be a connected complex manifold and
\begin{equation}\label{e-1}
p\, :\, E_H\, \longrightarrow\, X
\end{equation}
a holomorphic principal $H$--bundle over $X$. This means that $E_H$
is a complex manifold equipped with a holomorphic right action of $H$
$$
a\, :\, E_H\times H\, \longrightarrow\, E_H
$$
such that
\begin{itemize}
\item $p\circ a\,=\, p\circ p_{E_H}$, where $p_{E_H}$ is the projection of
$E_H\times H$ to $E_H$, and

\item the map $(p_{E_H},\, a)\, :\, E_H\times H\, \longrightarrow\, E_H\times_X E_H$
is an isomorphism.
\end{itemize}
Note that the first condition means that the action of $H$ takes a fiber of $p$ to
itself, so the image of the map $(p_{E_H},\, a)$ is contained in the fiber product
$E_H\times_X E_H$. The second condition above means that the action of $H$ on a
fiber of $p$ is free and transitive.

The adjoint bundle for $E_H$
$$
\text{ad}(E_H)\, :=\, E_H\times^H {\mathfrak h}\, \longrightarrow\, X
$$
is the holomorphic vector bundle over $X$ associated to $E_H$ for the adjoint action of $H$
on the Lie algebra $\mathfrak h$.

The holomorphic tangent (respectively, cotangent) bundle of a complex manifold $Y$ 
will be denoted by $TY$ (respectively, $T^*Y$). The tangent bundle of a real manifold 
$Y$ will be denoted by $T^{\mathbb R}Y$.

The {\it Atiyah bundle} for $E_H$
$$
\text{At}(E_H)\, :=\, (TE_H)/H \, \longrightarrow\, E_H/H\,=\, X
$$
is a holomorphic vector bundle over $X$ whose rank is $\dim X+\dim {\mathfrak h}$;
see \cite{At}. Let
$$T_{E_H/X}\, \subset\, TE_H$$ be the relative tangent bundle for the projection
$p$ in \eqref{e-1}. The subbundle
$$
(T_{E_H/X})/H\, \subset\, (TE_H)/H\,=\, \text{At}(E_H)
$$
is identified with the adjoint vector bundle $\text{ad}(E_H)$. This identification
is a consequence of the isomorphism of $T_{E_H/X}$ with the trivial vector
bundle $E_H\times \mathfrak h\, \longrightarrow\, E_H$ given by the action of $H$
on $E_H$. Therefore, the short exact sequence
$$
0\, \longrightarrow\, T_{E_H/X}\, \longrightarrow\, TE_H\, \stackrel{dp}{\longrightarrow}
\, p^*TX \, \longrightarrow\, 0\, ,
$$
where $dp$ is the differential of $p$, produces a short exact sequence on $X$
\begin{equation}\label{e2}
0\, \longrightarrow\, \text{ad}(E_H)\, \longrightarrow\, \text{At}(E_H)\,
\stackrel{dp}{\longrightarrow} \, TX \, \longrightarrow\, 0\, ,
\end{equation}
which is known as the \textit{Atiyah exact sequence} for $E_H$. For simplicity, we have used the same notation $dp$ for the differential $TE_H \,\longrightarrow\, p^*TX$
over $E_H$ as well as its descent $\text{At}(E_H) \,\longrightarrow\, TX$ to $X$. A holomorphic connection
on $E_H$ is a holomorphic homomorphism
\begin{equation}\label{eta}
\eta\, :\, TX\, \longrightarrow\,\text{At}(E_H)
\end{equation}
such that $(dp)\circ\eta\,=\, \text{Id}_{TX}$, where $dp$ is the
homomorphism in \eqref{e2}. For a holomorphic connection $\eta$
on $E_H$, the homomorphism
$$
\bigwedge\nolimits^2 TX\, \longrightarrow\,\text{ad}(E_H)\, ,\ \
v\otimes w - w\otimes v \, \longmapsto\, 2([\eta(v),\, \eta(w)]- \eta([v,\, w]))\, ,
$$
where $v$ and $w$ are locally defined holomorphic sections of $TX$, produces a
holomorphic section of $(\bigwedge^2 T^*X)\otimes \text{ad}(E_H)$. This
holomorphic section of $(\bigwedge^2 T^*X)\otimes \text{ad}(E_H)$ is called the
\textit{curvature} of the connection $\eta$.

The vector bundle $TE_H\otimes p^*(TX)^*$ on $E_H$ has a natural action of $H$ given by
the action of $H$ on $TE_H$ and the tautological action of $H$ on $p^*(TX)^*$.
We note that a holomorphic connection on $E_H$ is an $H$--invariant holomorphic
section of $TE_H\otimes p^*(TX)^*$.

\subsection{$G$--connections on $E_H$}\label{se2.1}

Let $G$ be a connected complex Lie group; its Lie algebra will be denoted by $\mathfrak g$.
The identity element of $G$ will be denoted by $e$. Let
\begin{equation}\label{e6}
\rho\,:\, G\times X\, \longrightarrow\, X
\end{equation}
be a holomorphic action of $G$ on $X$. Consider the holomorphic homomorphism
$$
\rho'\, :\, \text{At}(E_H)\oplus (X\times{\mathfrak g})\,\longrightarrow\, TX\, ,
\ \ (v\, ,w)\, \longmapsto\, dp(v)-d'\rho(w)\, ,
$$
where $dp$ is the homomorphism in \eqref{e2}, and
\begin{equation}\label{dpr}
d'\rho\, :\, X\times{\mathfrak g}\, \longrightarrow\, TX\, ,\ \
(x,\, v)\, \longmapsto\, (d\rho)(e,x)(v,0)\, ,
\end{equation}
with $(d\rho)(e,x)\, :\, {\mathfrak g}\oplus T_xX\,\longrightarrow\, T_xX$ being the
differential of $\rho$ at $(e,\,x)\,\in\, G\times X$. Define the subsheaf
\begin{equation}\label{e8}
\text{At}_\rho(E_H)\, :=\,
(\rho')^{-1}(0)\, \subset\, \text{At}(E_H)\oplus (X\times{\mathfrak g})\, .
\end{equation}
Since the differential $dp$ is surjective, it follows that $\rho'$ is surjective.
This implies that $\text{At}_\rho(E_H)$ is a holomorphic subbundle of
$\text{At}(E_H)\oplus (X\times{\mathfrak g})$. The
vector bundle $\text{At}_\rho(E_H)$ fits in a
commutative diagram with exact rows
\begin{equation}\label{e9}
\begin{matrix}
0 & \longrightarrow & \text{ad}(E_H) & {\longrightarrow}& \text{At}_\rho
(E_H) & \stackrel{q}{\longrightarrow}& X\times{\mathfrak g} & \longrightarrow & 0\\
&& \Vert && ~ \Big\downarrow J && ~\,~\,~\,~\, \Big\downarrow d'\rho\\
0 & \longrightarrow & \text{ad}(E_H) & \longrightarrow & \text{At}
(E_H) & \stackrel{dp}{\longrightarrow}& TX & \longrightarrow & 0\\
\end{matrix}
\end{equation}
where $J$ (respectively, $q$) is given by the projection of
$\text{At}(E_H)\oplus (X\times{\mathfrak g})$ to
$\text{At}(E_H)$ (respectively, $X\times{\mathfrak g}$). (See \cite{BP}.)

A \textit{holomorphic} $G$--\textit{connection} on $E_H$ is a
holomorphic homomorphism of vector bundles
\begin{equation}\label{h}
h\, :\, X\times{\mathfrak g}\,\longrightarrow\,\text{At}_\rho(E_H)
\end{equation}
such that $q\circ h\,=\, \text{Id}_{X\times{\mathfrak g}}$, where $q$
is the homomorphism in \eqref{e9}. The curvature
of a $G$--connection $h$
$$
(s,\, t)\, \longmapsto\, [h(s),\, h(t)] - h([s,\, t])
$$
is a holomorphic section
\begin{equation}\label{b}
{\mathcal K}(h)\, \in\, H^0(X,\, \text{ad}(E_H)\otimes \bigwedge\nolimits^2
(X\times{\mathfrak g})^*)\, =\, H^0(X,\, \text{ad}(E_H))\otimes\bigwedge\nolimits^2
{\mathfrak g}^*\, .
\end{equation}

We will give examples of $G$--connection.

Let $a\, :\, E_H\times H\, \longrightarrow\, E_H$ be the action of $H$ on the
principal $H$--bundle $E_H$.

A $G$--action on the principal bundle $E_H$ is a holomorphic action
of $G$ on the total space of $E_H$
\begin{equation}\label{rhoE}
\rho_E\, :\, G\times E_H\, \longrightarrow\, E_H
\end{equation}
such that
\begin{enumerate}
\item{} $p\circ\rho_E\,=\, \rho\circ (\text{Id}_G\times p)$, where $p$ and $\rho$
are the maps in \eqref{e-1} and \eqref{e6} respectively, and

\item{} $\rho_E\circ (\text{Id}_G\times a)\,=\, a\circ (\rho_E\times \text{Id}_H)$
as maps from $G\times E_H\times H$ to $E_H$ (this condition means that the actions of
$G$ and $H$ on $E_H$ commute).
\end{enumerate}
An equivariant principal $H$--bundle is a holomorphic principal $H$--bundle with
a $G$--action.

Let $\rho_E\, :\, G\times E_H\, \longrightarrow\, E_H$ be a $G$--action on $E_H$.
Consider the homomorphism 
$$
\widetilde{h}\, :\, E_H\times{\mathfrak g}\,\longrightarrow\,TE_H
$$
given by the differential $d\rho_E$ of the action $\rho_E$; more precisely,
$\widetilde{h}(z, v)\,=\, d\rho_E(e, z)(v,0)$, so $\widetilde{h}$ is the homomorphism
in \eqref{dpr} when $X$ is substituted by $E_H$.
Since the actions of $G$
and $H$ on $E_H$ commute, this homomorphism $\widetilde{h}$ produces a $G$--connection
\begin{equation}\label{gcr}
h_0\, :\, X\times{\mathfrak g}\,\longrightarrow\,\text{At}_\rho(E_H)
\end{equation}
on $E_H$; the curvature of this $G$--connection $h_0$ vanishes identically
\cite[Lemma 4.1]{BP}.

Let $Y$ be a connected compact complex manifold such that $TY$ is holomorphically trivial.
Then $Y$ is holomorphically
isomorphic to $G/\Gamma$, where $G$ is a connected complex Lie group
and $\Gamma\, \subset\, G$ is a cocompact lattice \cite{Wa}; in fact, $G$ is the
connected component, containing the identity element, of the group of all holomorphic
automorphisms of $Y$. Consider the left--translation action of $G$ on $G/\Gamma\,=\, Y$. A
$G$--connection on a holomorphic principal $H$--bundle $E_H$ on $Y$
is an usual holomorphic connection on the principal $H$--bundle.

\subsection{Distributions under a flow}

Let $Y$ be a connected $C^\infty$ manifold and
$$
{\mathcal D}\, \subset\, T^{\mathbb R}Y
$$
a $C^\infty$ subbundle. In other words, $\mathcal D$ is a distribution on $Y$. The
fiber of $\mathcal D$ over any point $z\, \in\, Y$ will be denoted by ${\mathcal D}_z$.

Let $\xi$ be a $C^\infty$ vector field on $Y$. Given any point $x\, \in\, Y$,
there is an open neighborhood $x\, \in\, U_x\, \subset\, Y$ and an open
interval $0\, \in\, I_x\, \subset\, \mathbb R$, such that $\xi$ integrates to a
flow
$$
\Phi_x\, :\, U_x\times I_x\, \longrightarrow\, Y\, .
$$
For any $t\, \in\, I_x$, define
$$
\Phi_{x,t}\, :\, U_x\, \longrightarrow\, Y\, ,\ \ z\, \longmapsto\, \Phi_x(z,t)\, .
$$

\begin{lemma}\label{lem1}
The following two are equivalent:
\begin{enumerate}
\item For every $x\, \in\, Y$ and $z\, \in\, U_x$ as above,
$$
(d\Phi_{x,t})(z)({\mathcal D}_z)\,=\, {\mathcal D}_{\Phi_{x,t}(z)}\, ,
$$
where $d\Phi_{x,t}(z)\, :\, T^{\mathbb R}_z Y\, \longrightarrow\,
T^{\mathbb R}_{\Phi_{x,t}(z)}Y$ is the differential of the map $\Phi_{x,t}$ at $z$.

\item $[\xi,\, {\mathcal D}]\, \subset\, {\mathcal D}$.
\end{enumerate}
\end{lemma}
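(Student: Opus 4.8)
The plan is to reduce both implications to the standard infinitesimal formula relating the Lie bracket to the flow of $\xi$. Writing $\Phi_t\,:=\,\Phi_{x,t}$ for brevity, and abbreviating the pullback of a locally defined vector field $V$ by $(\Phi_t)^*V$, so that $((\Phi_t)^*V)(z)\,=\,(d\Phi_t(z))^{-1}V(\Phi_t(z))$, the single tool I would use is
\begin{equation}\label{keyid}
\frac{d}{dt}\,((\Phi_t)^*V)\,=\,(\Phi_t)^*[\xi,\, V]\, .
\end{equation}
Evaluating \eqref{keyid} at $t\,=\,0$ recovers $[\xi,\, V]\,=\,\frac{d}{dt}\big|_{t=0}(\Phi_t)^*V$. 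Both equivalences will come out of \eqref{keyid} together with elementary linear algebra.

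For the implication (1) $\Rightarrow$ (2), I would fix a local section $V$ of $\mathcal D$ and a point $z$, and study the curve $t\,\longmapsto\,((\Phi_t)^*V)(z)\,=\,(d\Phi_t(z))^{-1}V(\Phi_t(z))$ inside the fixed vector space $T^{\mathbb R}_zY$. By hypothesis (1) the isomorphism $d\Phi_t(z)$ maps $\mathcal D_z$ onto $\mathcal D_{\Phi_t(z)}$, so its inverse maps $\mathcal D_{\Phi_t(z)}$ onto $\mathcal D_z$; since $V(\Phi_t(z))\,\in\,\mathcal D_{\Phi_t(z)}$, the entire curve lies in the linear subspace $\mathcal D_z$. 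A differentiable curve contained in a fixed linear subspace has its derivative in that subspace, so differentiating at $t\,=\,0$ and invoking \eqref{keyid} yields $[\xi,\, V](z)\,\in\,\mathcal D_z$. As $V$ and $z$ are arbitrary, this is exactly (2).

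For the implication (2) $\Rightarrow$ (1), I would use that $\mathcal D$, being a subbundle, admits a local frame $V_1,\, \ldots,\, V_k$ near $z$. Hypothesis (2) lets me write $[\xi,\, V_i]\,=\,\sum_j a_{ij}V_j$ for smooth functions $a_{ij}$. Setting $U_i(t)\,:=\,((\Phi_t)^*V_i)(z)\,\in\,T^{\mathbb R}_zY$ and combining \eqref{keyid} with the function-linearity $(\Phi_t)^*(fV)\,=\,(f\circ\Phi_t)\,(\Phi_t)^*V$, I obtain the linear system $\frac{d}{dt}U_i(t)\,=\,\sum_j a_{ij}(\Phi_t(z))\,U_j(t)$ with initial data $U_i(0)\,=\,V_i(z)$. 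Because the fundamental solution matrix of a linear ODE is invertible for every $t$, the span of $\{U_1(t),\, \ldots,\, U_k(t)\}$ stays equal to the span of $\{V_1(z),\, \ldots,\, V_k(z)\}\,=\,\mathcal D_z$ for all $t$. Rewriting $U_i(t)\,=\,(d\Phi_t(z))^{-1}V_i(\Phi_t(z))$ and applying the isomorphism $d\Phi_t(z)$ turns this into $(d\Phi_t(z))(\mathcal D_z)\,=\,\mathrm{span}\{V_i(\Phi_t(z))\}\,=\,\mathcal D_{\Phi_t(z)}$, which is (1).

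The main obstacle is really the careful justification of \eqref{keyid} and its function-linearity; once these are in hand, everything else is linear algebra or the elementary fact that a linear ODE has an invertible fundamental matrix. I would either cite the standard dynamical characterization of the Lie derivative or derive \eqref{keyid} directly from $\frac{d}{dt}\Phi_t\,=\,\xi\circ\Phi_t$ by differentiating under the flow. The constant rank of $\mathcal D$ is what guarantees the local frame used in the second implication, and shrinking $U_x$ if necessary keeps that frame defined along the relevant trajectories.
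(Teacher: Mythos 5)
Your proof is correct, and it takes a genuinely different route from the paper's. You argue directly on the vector-field side, using the dynamical identity $\tfrac{d}{dt}\left(\Phi_t^*V\right)\,=\,\Phi_t^*[\xi,\,V]$: the direction (1)$\Rightarrow$(2) reduces to the fact that a smooth curve confined to the fixed linear subspace ${\mathcal D}_z$ has its derivative in that subspace, and the direction (2)$\Rightarrow$(1) is handled by expanding $[\xi,\,V_i]\,=\,\sum_j a_{ij}V_j$ in a local frame of $\mathcal D$ and invoking the invertibility of the fundamental matrix of the resulting linear ODE. The paper instead dualizes: it introduces the space $\mathcal W$ of $1$--forms annihilating $\mathcal D$, asserts that the flow-invariance in (1) is equivalent to $L_\xi w\,\in\,{\mathcal W}$ for all $w\,\in\,{\mathcal W}$, and then derives the equivalence of that condition with $[\xi,\,{\mathcal D}]\,\subset\,{\mathcal D}$ by pure algebra from $(L_\xi w)(\theta)\,=\,\xi(w(\theta))-w([\xi,\,\theta])$. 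The trade-off is that the paper's argument is shorter but leaves unproved exactly the analytic step --- passing between the finite (flow) statement and the infinitesimal (Lie derivative) statement --- whereas your ODE argument is precisely a proof of that step, so your write-up is more self-contained. One point to tighten: your frame $V_1,\dots,V_k$ is defined only near $z$, while the lemma asserts invariance for every $t\,\in\,I_x$, and the trajectory $\{\Phi_s(z)\}_{s}$ may leave the frame's domain; shrinking $U_x$ does not by itself resolve this, since $I_x$ is not shrunk. The standard fix is a connectedness argument: the set of $t$ for which invariance holds along the segment from $0$ to $t$ is nonempty, closed by continuity, and open by running your ODE argument in a frame centered at $\Phi_t(z)$ combined with the group property $\Phi_{t+s}\,=\,\Phi_s\circ\Phi_t$; hence it is all of $I_x$.
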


\begin{proof}
Let ${\mathcal W}$ denote the space of all $C^\infty$ $1$--forms on $Y$ 
that vanish on $\mathcal D$. The first statement is equivalent to the statement
that
\begin{equation}\label{e3}
L_\xi(w)\, \in\, {\mathcal W}\ \ \ \forall~\ w\, \in\, {\mathcal W}\, ,
\end{equation}
where $L_\xi$ denotes the Lie derivative with respect to the vector field $\xi$.

First assume that
\begin{equation}\label{e4}
[\xi,\, {\mathcal D}]\, \subset\, {\mathcal D}\, .
\end{equation}
To prove that
\eqref{e3} holds, take any $w\, \in\, {\mathcal W}$ and any $C^\infty$ section
$\theta$ of $\mathcal D$. We have
$$
(L_\xi(w))(\theta) \,=\, \xi(w(\theta)) - w(L_\xi\theta) \,=\,
\xi(w(\theta)) - w([\xi,\, \theta])\, .
$$
Now, $w(\theta)\,=\, 0$, and $[\xi,\, \theta]$ is section of $\mathcal D$ by
\eqref{e4}. Hence $(L_\xi(w))(\theta) \,=\, 0$, which implies that
\eqref{e3} holds.

Now assume that \eqref{e3} holds. To prove \eqref{e4}, let
$\theta$ be any $C^\infty$ section of $\mathcal D$. Take any $w\, \in\, \mathcal W$. 
We have
$$
w([\xi,\, \theta])\,=\, w(L_\xi\theta)\,=\, \xi(w(\theta)) - (L_\xi w)(\theta)\, .
$$
Now, $w(\theta)\,=\, 0$, and also $(L_\xi w)(\theta)\,=\, 0$ because $L_\xi w\,\in\,
\mathcal W$ by \eqref{e3}. Hence \eqref{e4} holds.
\end{proof}

\section{Connections and (strongly) adapted connections}

\subsection{Definitions}\label{se3.1}

Let $E_H$ be a holomorphic principal bundle on $X$ equipped with a holomorphic
connection 
$$
\eta\, :\, TX\, \longrightarrow\,\text{At}(E_H)
$$
(see \eqref{eta}). Since $\text{At}(E_H)\,=\, (TE_H)/H$, the image of $\eta$
is a holomorphic distribution on $E_H$; it is known as the \textit{horizontal
distribution} for the connection $\eta$.

As before, a connected complex Lie group $G$ acts holomorphically on $X$.

Given a holomorphic $G$--connection
$h\, :\, X\times{\mathfrak g}\,\longrightarrow\,\text{At}_\rho(E_H)$ on $E_H$ (see
\eqref{h}), the connection $\eta$ is said to be \textit{adapted} to $h$ if
\begin{equation}\label{a1}
[J\circ h(X\times \{v\}),\, \eta(TX)]\, \subset\, \eta(TX)\ \ \forall\ v\, \in\,
\mathfrak g\, ,
\end{equation}
where $J$ is the homomorphism in \eqref{e9}. Note that a $C^\infty$ section of $\text{At}(E_H)$
defines a $H$--invariant vector field on $E_H$ of type $(1,\, 0)$.

The connection $\eta$ is said to be
\textit{strongly adapted} to $h$ if it is adapted to $h$, and furthermore
\begin{equation}\label{Jc}
\text{image}(J\circ h) \, \subset\, \text{image}(\eta)\, .
\end{equation}

We will now give examples to show that the conditions in \eqref{a1} and \eqref{Jc}
are independent.

Consider the trivial action of the multiplicative group ${\mathbb C}^*\,=\,
{\mathbb C}\setminus\{0\}$ on $X$. Let $E$ be a holomorphic principal
$\text{GL}(r,{\mathbb C})$--bundle on $X$ admitting a holomorphic connection, for example
$E$ can be the trivial holomorphic principal $\text{GL}(r,{\mathbb C})$--bundle
$X\times \text{GL}(r,{\mathbb C})$ on $X$.
The center of $\text{GL}(r,{\mathbb C})$ is identified with ${\mathbb C}^*$ by sending
any $c\, \in\, {\mathbb C}^*$ to $c\cdot \text{Id}_{{\mathbb C}^r}\, \in\,
\text{GL}(r,{\mathbb C})$. Using this identification, the action of the center of
$\text{GL}(r,{\mathbb C})$ on $E$ produces an action of ${\mathbb C}^*$ on
$E$. Since ${\mathbb C}^*$ is in the center of $\text{GL}(r,{\mathbb C})$, the actions
of ${\mathbb C}^*$ and $\text{GL}(r,{\mathbb C})$ on $E$ commute.
If $E'$ is the vector bundle of rank $r$ associated to $E$ by the
standard representation of $\text{GL}(r,{\mathbb C})$, then this action of ${\mathbb C}^*$ on
$E$ corresponds to the action of ${\mathbb C}^*$ on $E'$ as scalar multiplications.
Let $h$ be the holomorphic ${\mathbb C}^*$--connection on $E$ given by this
action of ${\mathbb C}^*$ on $E$ (see \eqref{gcr}). Any holomorphic
connection on the principal $\text{GL}(r,{\mathbb C})$--bundle $E$ is adapted to $h$. But
\eqref{Jc} fails for every holomorphic connection on $E$.

Now take $X\,=\, {\mathbb C}^2$ and $G\,=\, \mathbb C\,=\, H$. Let $E_H$ be the 
trivial principal $\mathbb C$--bundle ${\mathbb C}^2\times {\mathbb C}\, 
\longrightarrow\, {\mathbb C}^2$. Take $\rho$ to be the action of $\mathbb C$ on 
${\mathbb C}^2$ defined by $$(z,\, (x,\, y))\, \longmapsto\, (x+z,\, y)\, , \ \ 
z\,\in\, \mathbb C\, , \ \ (x,\, y)\, \in\, {\mathbb C}^2\, .$$ This action of 
$\mathbb C$ on $X$ and the trivial action of $\mathbb C$ on $\mathbb C$ together 
define an action of $\mathbb C$ on $E_H\,=\, X\times\mathbb C$. Let $h$ be the 
holomorphic $\mathbb C$--connection on $E_H$ associated to this action of $\mathbb 
C$ on $E_H$ (see \eqref{gcr}). Let $D$ be the holomorphic connection on the 
principal $H$--bundle $E_H$ defined by $f\, \longmapsto\, df+ xf\cdot dy$, where $f$ 
is any holomorphic function on ${\mathbb C}^2$ (holomorphic sections of $E_H$ are 
holomorphic functions) and $d$ denotes the standard de Rham differential. Then 
\eqref{Jc} holds while \eqref{a1} fails.

\subsection{Equivariant bundles and adaptable connections}\label{s3.2}

As in \eqref{rhoE}, take a $G$--action $\rho_E$ on $E_H$. As mentioned earlier, there
is a natural $G$--connection on $E_H$
\begin{equation}\label{h0}
h_0\, :\, X\times{\mathfrak g}\,\longrightarrow\,\text{At}_\rho(E_H)
\end{equation}
corresponding to $\rho_E$.

Let $p_X\, :\, G\times X\, \longrightarrow\, X$ be the natural projection. The action
$\rho_E$ produces a holomorphic isomorphism of principal $H$--bundles
\begin{equation}\label{beta}
\beta\, :\, p^*_XE_H\, \longrightarrow\, \rho^*E_H\, ,\ \ \beta(g, x)(z)\,=\,
\rho_E(g, z) \ \ \forall\ g\,\in\, G,\;\; x\,\in\, X,\;\; z\, \in\, (E_H)_x\, ,
\end{equation}
where $\rho$ is the map in \eqref{e6}.

For any $g\, \in\, G$, let
$$
j_g\, :\, X\, \longrightarrow\,G\times X\, ,\ \ \ x \longmapsto\, (g,\, x)
$$
be the embedding. For all $g\, \in\, G$, the isomorphism $\beta$ in \eqref{beta}
produces a holomorphic isomorphism of principal $H$--bundles
\begin{equation}\label{bg2}
\beta^g\, :\, E_H\, \longrightarrow\, (\rho\circ j_g)^*E_H\, , \ \ \
z\, \longmapsto\, \beta(g,\, x)(z)\,=\, \rho_E(g, z)\ \ \forall \ x\,\in\, X, \
z\,\in\, (E_H)_x\, .
\end{equation}
The map from the holomorphic connections on $E_H$ to the holomorphic connections on
$(\rho\circ j_g)^*E_H$ induced by the above isomorphism $\beta^g$ will be denoted by
$\beta^g_*$; note that $\beta^g_*$ is a bijection.

\begin{proposition}\label{prop1}
A holomorphic connection $\eta$ on $E_H$ is adapted to the $G$--connection $h_0$ 
in \eqref{h0} associated to $\rho_E$ if and only if for all $g\, \in\, G$,
\begin{equation}\label{hold}
(\rho \circ j_g)^*\eta\,=\, \beta^g_*(\eta)
\end{equation}
(both are connections on the principal $H$--bundle $(\rho \circ j_g)^*E_H$).
\end{proposition}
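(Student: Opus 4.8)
The plan is to reformulate both conditions in terms of the horizontal distribution upstairs on $E_H$ and then bridge the infinitesimal condition \eqref{a1} with the finite condition \eqref{hold} via Lemma \ref{lem1}. First I would unwind the left side of \eqref{a1}. By the construction of $h_0$ in \eqref{h0}, for each $v\in\mathfrak{g}$ the section $J\circ h_0(X\times\{v\})$ of $\text{At}(E_H)$ lifts to the $H$--invariant holomorphic vector field $\xi_v:=\widetilde{h}(\cdot,v)=d\rho_E(e,\cdot)(v,0)$ on $E_H$, i.e. the infinitesimal generator of $\rho_E$ in the direction $v$. Similarly, $\eta(TX)\subset\text{At}(E_H)$ lifts to the $H$--invariant holomorphic horizontal distribution $\mathcal{D}\subset TE_H$ of the connection $\eta$. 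Thus \eqref{a1} is precisely $[\xi_v,\,\mathcal{D}]\subset\mathcal{D}$ for every $v\in\mathfrak{g}$.

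Next I would apply Lemma \ref{lem1} with $\xi=\xi_v$ and this distribution $\mathcal{D}$. The lemma gives that $[\xi_v,\,\mathcal{D}]\subset\mathcal{D}$ holds if and only if the (local) flow of $\xi_v$ preserves $\mathcal{D}$. The key observation is that this flow is exactly the action of the one--parameter subgroup $t\mapsto\exp(tv)$, namely $\rho_E(\exp(tv),\cdot)$. Hence adaptedness is equivalent to the statement that $\rho_E(\exp(tv),\cdot)$ preserves $\mathcal{D}$ for all $v\in\mathfrak{g}$ and all small $t$.

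Then I would translate the finite condition \eqref{hold} into the same language. Since $g\in G$ acts invertibly, $r_g:=\rho\circ j_g:X\to X$ is a biholomorphism; writing $R_g:=\rho_E(g,\cdot):E_H\to E_H$ (which covers $r_g$) and letting $\pi_g:(\rho\circ j_g)^*E_H\to E_H$ be the canonical projection, the definition of $\beta^g$ in \eqref{bg2} gives $\pi_g\circ\beta^g=R_g$. Here $\pi_g$ is itself an isomorphism of total spaces, so $d\pi_g$ is everywhere invertible. The horizontal distribution of $\beta^g_*(\eta)$ is $d\beta^g(\mathcal{D})$, while that of $(\rho\circ j_g)^*\eta$ is the distribution carried onto $\mathcal{D}$ by the isomorphism $d\pi_g$. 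Using $d\pi_g\circ d\beta^g=dR_g$, I would show that \eqref{hold} holds for $g$ if and only if $dR_g(\mathcal{D})=\mathcal{D}$, that is, $R_g$ preserves $\mathcal{D}$. Finally, since $G$ is connected it is generated by the images of the exponential map, so preservation of $\mathcal{D}$ under all the flows $\rho_E(\exp(tv),\cdot)$ is equivalent to preservation under $R_g$ for every $g\in G$; combined with the previous two paragraphs this closes both implications.

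The main obstacle I anticipate is the bookkeeping in this translation step: carefully identifying the horizontal distributions of the pullback connection $(\rho\circ j_g)^*\eta$ and the pushforward connection $\beta^g_*(\eta)$ on $(\rho\circ j_g)^*E_H$, and verifying that their equality is equivalent to $dR_g(\mathcal{D})=\mathcal{D}$. A secondary point requiring care is that Lemma \ref{lem1} is stated for real $C^\infty$ vector fields and flows, whereas $\xi_v$ is holomorphic; here one invokes the remark that a section of $\text{At}(E_H)$ defines an $H$--invariant vector field of type $(1,0)$, so that $\mathcal{D}$ and $\xi_v$ may be treated on the underlying real manifold and the holomorphic flow of $\xi_v$ coincides with the restriction of $\rho_E$ to the relevant one--parameter subgroup.
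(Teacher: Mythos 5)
Your proposal is correct and follows essentially the same route as the paper's own proof: both convert the adaptedness condition \eqref{a1} into invariance of the horizontal distribution $\mathcal{D}=\mathrm{image}(\eta)\subset TE_H$ under the flows $z\longmapsto \rho_E(\exp(tv),z)$ via Lemma \ref{lem1}, identify these flows with the maps $\beta^{\exp(tv)}$ of \eqref{bg2}, and then pass to all of $G$ using connectedness. The only differences are cosmetic: you make explicit the bookkeeping showing that \eqref{hold} is equivalent to $dR_g(\mathcal{D})=\mathcal{D}$ (which the paper leaves implicit), and you finish by noting that the subgroup generated by $\exp(\mathfrak{g})$ is all of $G$, whereas the paper instead invokes density of $\{\exp(tv)\}_{v\in\mathfrak{g},\,t\in\mathbb{R}}$ in $G$ --- your group-generation argument is, if anything, the more elementary and robust of the two.
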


\begin{proof}
First assume that $\eta$ is adapted to $h_0$. Take any $v\, \in\, \mathfrak g$. The
flow on $E_H$ generated by $v$ sends any $t\, \in\, \mathbb R$ to the biholomorphism
$$
F_t\,:\, E_H\, \longrightarrow\, E_H\, ,\ \ \
z\, \longmapsto\, \rho_E(\exp(tv),\, z)\, .
$$
Note that $F_t$ coincides with $\beta^{\exp(tv)}$ constructed in \eqref{bg2}.
Consider the $H$-invariant distribution $$D^\eta \,:=\, \text{image}(\eta)\, \subset\, TE_H\, .$$
Its fiber over any point $z\, \in\, E_H$ will be denoted by $D^\eta_z$.
Since $\eta$ is adapted to $h_0$, from Lemma \ref{lem1} it follows that
\begin{equation}\label{fl}
(dF_t)(z)(D^\eta_z) \, =\, D^\eta_{F_t(z)}
\end{equation}
for all $z\, \in\, E_H$ and $t\, \in\, {\mathbb R}$, where
$(dF_t)(z)\, :\, T_zE_H\, \longrightarrow\, T_{F_t(z)}E_H$ is the differential of
the map $F_t$. Since the subset $\{\exp(tv)\}_{v\in {\mathfrak g}, t\in {\mathbb R}}
\, \subset\, G$ is
dense in the analytic topology (recall that $G$
is connected), and also $F_t\,=\, \beta^{\exp(tv)}$, from \eqref{fl}
we conclude that \eqref{hold} holds for all $g\, \in\, G$.

Now assume that \eqref{hold} holds for all $g\, \in\, G$. This implies that
\eqref{fl} holds for all $z\, \in\, E_H$ and $t\, \in\, {\mathbb R}$. Consequently, from
Lemma \ref{lem1} we conclude that $\eta$ is adapted to $h_0$.
\end{proof}

Take any point $x\, \in\, X$. Define
$$
\rho_x\, :\, G\, \longrightarrow\, X\, ,\ \ \ g\, \longmapsto\,
\rho\circ j_g(x)\,=\,\rho (g,\, x)\, .
$$
Consider the map
$$
\rho_{E,x}\, :\, G\times (E_H)_x\, \longrightarrow\, \rho^*_x E_H\, ,\ \ \
(g,\, z)\,\longmapsto\, \rho_E(g,\, z)\, .
$$
Since this $\rho_{E,x}$ is $H$--equivariant (recall that the actions of $G$ and $H$
on $E_H$ commute), it identifies the pulled back principal
$H$--bundle $\rho^*_xE_H$ with the trivial principal
$H$--bundle $G\times (E_H)_x\, \longrightarrow\, G$. Let $D^0_x$ be the holomorphic
connection on the principal $H$--bundle $\rho^*_x E_H$ induced by the trivial connection
on $G\times (E_H)_x$ using the above isomorphism $\rho_{E,x}$. Note that
$\rho^*_x E_H$ is identified with the restriction of $\rho^*E_H$ to $G\times\{x\}$,
because $\rho_x$ is the restriction of $\rho$ to $G\times\{x\}$. Therefore, 
$\rho^*\eta\vert_{G\times\{x\}}$ is also a connection
on $\rho^*_x E_H$.

\begin{proposition}\label{prop2}
A holomorphic connection $\eta$ on $E_H$ is strongly adapted to the $G$--connection $h_0$
in \eqref{h0} if and only if the following two hold:
\begin{enumerate}
\item For all $g\, \in\, G$,
$$
(\rho\circ j_g)^*\eta\,=\, \beta^g_*(\eta)\, .
$$

\item For every $x\, \in\, X$, the connection $D^0_x$ on $\rho^*_x E_H$ coincides with
the connection $\rho^*\eta\vert_{G\times\{x\}}$.
\end{enumerate}
\end{proposition}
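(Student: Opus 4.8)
The plan is to leverage Proposition \ref{prop1} and reduce the statement to an equivalence about the extra horizontality condition \eqref{Jc}. By definition, $\eta$ is strongly adapted to $h_0$ precisely when it is adapted to $h_0$ and, in addition, $\text{image}(J\circ h_0)\,\subset\,\text{image}(\eta)$. Proposition \ref{prop1} already identifies the adaptedness of $\eta$ with condition (1). Hence it suffices to prove that, for the $G$--connection $h_0$ coming from $\rho_E$, the inclusion $\text{image}(J\circ h_0)\,\subset\,\text{image}(\eta)$ is equivalent to condition (2). I would state at the outset that both are intrinsically statements about whether the fundamental vector fields of the $G$--action lie in the horizontal distribution $D^\eta\,=\,\text{image}(\eta)\,\subset\, TE_H$, and organize the proof around this observation.

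First I would unwind $\text{image}(J\circ h_0)$. Recall that $h_0$ is obtained by descending the homomorphism $\widetilde h(z,v)\,=\,d\rho_E(e,z)(v,0)$, whose values are the fundamental vector fields of the $\rho_E$--action; these are $H$--invariant because the $G$-- and $H$--actions on $E_H$ commute, so $J\circ h_0$ is exactly the family of these fundamental vector fields viewed in $\text{At}(E_H)\,=\,(TE_H)/H$. Thus \eqref{Jc} says precisely that at every point $z\,\in\, E_H$ the subspace $\{\widetilde h(z,v)\,:\,v\,\in\,\mathfrak g\}$ is contained in $D^\eta_z$, i.e.\ that the $G$--orbit directions are $\eta$--horizontal.

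Next I would make the two connections on $\rho^*_x E_H$ explicit and compare their horizontal distributions. Using the trivialization $\rho_{E,x}$, the horizontal distribution of $D^0_x$ is the image of the $TG$--directions of the trivial connection on $G\times (E_H)_x$; pushing forward along the natural map $\nu\,\colon\,\rho^*_x E_H\,\longrightarrow\, E_H$, and using $\nu\circ\rho_{E,x}(g,z)\,=\,\rho_E(g,z)$, the $D^0_x$--horizontal lift of $X_g\,\in\, T_gG$ becomes the orbit--tangent vector at $\rho_E(g,z)$ obtained by differentiating the orbit map $g'\,\longmapsto\,\rho_E(g',z)$. On the other hand, $\rho^*\eta\vert_{G\times\{x\}}$ is the pullback connection, so the $\nu$--pushforward of its horizontal lift of $X_g$ is the $\eta$--horizontal lift of $d\rho_x(g)(X_g)\,\in\, T_{\rho(g,x)}X$. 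The key computation is that both pushforwards project under $dp$ to the same base vector $d\rho_x(g)(X_g)$, because $p\circ\rho_E(\,\cdot\,,z)\,=\,\rho_x$; hence they are two lifts of one vector and coincide if and only if the orbit--tangent vector is $\eta$--horizontal. Letting $X_g$, $g$ and $z$ vary, and noting that $\rho_E(g,z)$ sweeps out all of $E_H$ together with all its orbit directions (take $g\,=\,e$ and $z$ arbitrary), I would conclude that condition (2) holds for every $x$ if and only if every fundamental vector field lies in $D^\eta$, which by the previous paragraph is exactly \eqref{Jc}.

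Combining these yields the assertion: strong adaptation equals adaptation together with \eqref{Jc}, which via Proposition \ref{prop1} equals condition (1) together with condition (2). I expect the main obstacle to be the bookkeeping of the third step, namely correctly describing the horizontal distributions of $D^0_x$ and $\rho^*\eta\vert_{G\times\{x\}}$ as distributions on the pulled--back bundle $\rho^*_x E_H$, verifying that their $\nu$--pushforwards are lifts of the same base direction, and confirming that the evaluation $(g,z)\,\longmapsto\,\rho_E(g,z)$ really does reach every point and every orbit direction of $E_H$, so that the pointwise horizontality condition globalizes to \eqref{Jc}.
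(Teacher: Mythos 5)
Your proof is correct and follows essentially the same route as the paper: Proposition \ref{prop1} disposes of the adaptedness part, and the remaining task is to identify the horizontality condition \eqref{Jc} with condition (2). The paper's own proof merely asserts this last equivalence in one line, whereas you verify it by comparing the two horizontal distributions on $\rho^*_x E_H$ (both being lifts of the same base vectors, they agree exactly when the $G$--orbit directions are $\eta$--horizontal), which is a sound filling-in of a step the paper leaves implicit.
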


\begin{proof}
First assume that $\eta$ is strongly adapted to $h_0$. Since
$\eta$ is adapted to $h_0$, Proposition \ref{prop1} says that $(\rho\circ j_g)^*\eta\,
=\, \beta^g_*(\eta)$ for all $g\, \in\, G$. The given condition \eqref{Jc} implies that
the connection $D^0_x$ coincides with $\rho^*\eta\vert_{G\times\{x\}}$.

The converse is similarly proved. Assume that the two statements in the
proposition hold. From Proposition \ref{prop1} we know that $\eta$ is
adapted to $h_0$. The second condition in the proposition implies that \eqref{Jc} holds. 
\end{proof}

\section{Criterion for adapted connection}

Let $\eta\, :\, TX\, \longrightarrow\, \text{At}(E_H)$ be a holomorphic
connection on $E_H$. Let
\begin{equation}\label{te}
\widetilde{\eta}\, :\, 
X\times{\mathfrak g}\, \longrightarrow\,\text{At}(E_H)\oplus (X\times{\mathfrak g})
\end{equation}
be the ${\mathcal O}_X$--linear homomorphism defined by
$$
(x,\, v)\, \longmapsto\, (\eta(d'\rho(x,v)),\, (x,\, v))\, ,
$$
where $d'\rho$ is the homomorphism in \eqref{dpr}. Since we have 
$(dp)\circ \eta\,=\, \text{Id}_{TX}$, where $dp$ is the homomorphism in
\eqref{e2}, it follows immediately that the image of $\widetilde{\eta}$ is contained in
$\text{At}_\rho(E_H)\, :=\,(\rho')^{-1}(0)$ (see \eqref{e8}). The homomorphism
$\widetilde{\eta}$ evidently is a $G$--connection on $E_H$.

Let ${\mathcal K}(\eta)\, \in\, H^0(X,\, \Omega^2_X\otimes \text{ad}(E_H))$
be the curvature of the connection $\eta$, where $\Omega^2_X\,=\, \bigwedge^2 T^*X$. For
any $w\, \in\, T_xX$, let
\begin{equation}\label{v3}
i_w({\mathcal K}(\eta)(x))\, \in\, (T^*X)_x\otimes \text{ad}(E_H)_x
\,=\, (T^*X\otimes \text{ad}(E_H))_x
\end{equation}
be the contraction of ${\mathcal K}(\eta)(x)\, \in\, (\Omega^2_X\otimes \text{ad}(E_H))_x$
by the tangent vector $w\,\in\, T_xX$.

\begin{lemma}\label{lem2}
The connection $\eta$ on $E_H$ is strongly adapted to the above constructed $G$--connection
$\widetilde{\eta}$ if and only if for all $v\, \in\, \mathfrak g$ and $x\in\, X$,
\begin{equation}\label{z}
i_{d'\rho(x,v)}({\mathcal K}(\eta)(x))\,=\, 0\, ,
\end{equation}
where $d'\rho$ is defined in \eqref{dpr} (see \eqref{v3} for the contraction).
\end{lemma}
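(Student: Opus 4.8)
The plan is to first observe that the strong-adaptedness refinement \eqref{Jc} is automatic here, and then to reduce the adaptedness condition \eqref{a1} to the curvature identity. Since $J$ in \eqref{e9} is the projection onto the $\text{At}(E_H)$ factor, for every $(x,\, v)\, \in\, X\times{\mathfrak g}$ we have $J\circ\widetilde\eta(x,\, v)\,=\,\eta(d'\rho(x,\, v))$, which lies in $\text{image}(\eta)$ by the very definition of $\widetilde\eta$ in \eqref{te}. Hence \eqref{Jc} holds automatically for the pair $(\eta,\, \widetilde\eta)$, so that being strongly adapted to $\widetilde\eta$ is equivalent to being merely adapted to it. It therefore suffices to show that \eqref{a1} for $h\,=\,\widetilde\eta$ is equivalent to \eqref{z}.

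For a fixed $v\, \in\, {\mathfrak g}$ I would write $\xi_v\, :=\, d'\rho(\cdot,\, v)$ for the fundamental vector field on $X$; by the previous paragraph the $H$--invariant vector field $J\circ\widetilde\eta(X\times\{v\})$ on $E_H$ is exactly the horizontal lift $\eta(\xi_v)$. Condition \eqref{a1} then reads $[\eta(\xi_v),\, \eta(w)]\, \in\, \eta(TX)$ for every locally defined holomorphic section $w$ of $TX$. The key step is to decompose this bracket via the Atiyah sequence \eqref{e2}: since $dp$ intertwines the bracket of $H$--invariant (hence $p$--projectable) vector fields on $E_H$ with the bracket of vector fields on $X$, and $dp\circ\eta\,=\,\text{Id}_{TX}$, applying $dp$ yields $dp([\eta(\xi_v),\, \eta(w)])\,=\,[\xi_v,\, w]$. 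Therefore the difference
$$
[\eta(\xi_v),\, \eta(w)] - \eta([\xi_v,\, w])
$$
lies in $\text{ad}(E_H)$, and by the definition of curvature recalled in Section \ref{se2.1} this vertical component is a nonzero constant multiple of ${\mathcal K}(\eta)(\xi_v,\, w)$.

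Granting this identity, the equivalence is immediate: the bracket $[\eta(\xi_v),\, \eta(w)]$ lies in $\eta(TX)$ if and only if its $\text{ad}(E_H)$--component vanishes, that is, if and only if ${\mathcal K}(\eta)(\xi_v,\, w)\,=\,0$. Letting $w$ range over all local sections of $TX$ shows that \eqref{a1} for the given $v$ is equivalent to the vanishing of the contraction $i_{\xi_v}{\mathcal K}(\eta)$, which is precisely the condition \eqref{z} for that $v$; imposing it for all $v\, \in\, {\mathfrak g}$ finishes the argument.

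The step I expect to demand the most care is the identification of the vertical component $[\eta(\xi_v),\, \eta(w)] - \eta([\xi_v,\, w])$ with the curvature. This means verifying that $dp$ is bracket--preserving on projectable vector fields, and carefully using the identification of $(T_{E_H/X})/H$ with $\text{ad}(E_H)$ so that the vertical part is read off correctly; once this bookkeeping is in place, matching against the curvature formula of Section \ref{se2.1} is routine.
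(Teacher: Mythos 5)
Your proof is correct and takes essentially the same approach as the paper: both first note that \eqref{Jc} is automatic from the construction of $\widetilde{\eta}$, and then reduce \eqref{a1} to \eqref{z} by identifying the vertical component of the bracket of $\eta(d'\rho(\cdot,v))$ with a horizontal lift $\eta(w)$ as the contraction of ${\mathcal K}(\eta)$ by $d'\rho(\cdot,v)$ evaluated against $w$. The only (cosmetic) difference is that the paper quotes the description of ${\mathcal K}(\eta)$ as the vertical part of brackets of horizontal lifts, whereas you re-derive that description from the curvature formula of Section~2 using the fact that $dp$ intertwines brackets of projectable vector fields.
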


\begin{proof}
{}From the construction of $\widetilde{\eta}$ in \eqref{te} it follows immediately that
the condition in \eqref{Jc} holds. We need to show that \eqref{a1} holds
if and only if \eqref{z} holds.

To prove this, we recall a construction of the curvature
${\mathcal K}(\eta)$. Given a point $x\, \in\, X$ and holomorphic tangent
vectors $v,\, w\, \in\, T_xX$, extend $v,\, w$ to vector fields
$\widetilde{v},\, \widetilde{w}$ of type $(1,\, 0)$ on some open neighborhood of
the point $x$. Let $\widehat{v}\,=\, \eta(\widetilde{v})$
and $\widehat{w}\,=\, \eta(\widetilde{w})$ be the horizontal lifts
of $\widetilde{v}$ and $\widetilde{w}$ respectively, for the connection $\eta$. Then
$$
{\mathcal K}(\eta)(x)(v,w)\,=\, ([\widehat{v},\, \widehat{w}]_{\rm Vert})\vert_{p^{-1}(x)}
\, ,
$$
where $[\widehat{v},\, \widehat{w}]_{\rm Vert}$ is the component of
the Lie bracket $[\widehat{v},\, \widehat{w}]$ in the vertical direction
(for the projection $p$). We note that
the section $([\widehat{v},\, \widehat{w}]_{\rm Vert})\vert_{p^{-1}(x)}$ of
$T_{E_H/X}$ over $p^{-1}(x)$ is $H$--invariant and hence it defines an element
of the fiber $\text{ad}(E_H)_x$ over $x$; recall that $\text{ad}(E_H)$ is
identified with $(T_{E_H/X})/H$. The element $([\widehat{v},\,
\widehat{w}]_{\rm Vert})\vert_{p^{-1}(x)}\,\in\, \text{ad}(E_H)_x$ does not depend
on the choice of the extensions $\widetilde{v}$ and $\widetilde{w}$ of $v$ and
$w$ respectively. From this description of ${\mathcal K}(\eta)$
it follows immediately that \eqref{a1} holds if and only if \eqref{z} holds.
\end{proof}

{}From the proof of Lemma \ref{lem2} we have the following:

\begin{corollary}\label{cor1}
The connection $\eta$ on $E_H$ is adapted to the above constructed $G$--connection
$\widetilde{\eta}$ if and only if the condition in \eqref{z} holds. In other words,
the connection $\eta$ on $E_H$ is strongly adapted to
$\widetilde{\eta}$ if $\eta$ is adapted to $\widetilde{\eta}$.
\end{corollary}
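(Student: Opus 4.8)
The plan is to read off both assertions directly from the internal structure of the proof of Lemma \ref{lem2}, since nothing genuinely new must be computed. Recall that, by definition, $\eta$ is adapted to the $G$--connection $\widetilde{\eta}$ precisely when the bracket condition \eqref{a1} holds with $h$ replaced by $\widetilde{\eta}$. The proof of Lemma \ref{lem2} was organized into two logically separate observations. First, the condition \eqref{Jc} holds automatically for $\widetilde{\eta}$, as an immediate consequence of its construction in \eqref{te}, where the $\text{At}(E_H)$--component of $\widetilde{\eta}(x,v)$ is declared to be $\eta(d'\rho(x,v))$, so that $J\circ\widetilde{\eta}\,=\,\eta\circ d'\rho$ and hence $\text{image}(J\circ\widetilde{\eta})\,\subset\,\text{image}(\eta)$. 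Second, via the curvature description of $\mathcal{K}(\eta)$ in terms of the vertical components of the brackets of horizontal lifts, one obtains that \eqref{a1} holds if and only if \eqref{z} holds. Crucially, this second equivalence is derived without invoking \eqref{Jc} at all.

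First I would therefore simply restate this second observation: since being adapted to $\widetilde{\eta}$ means exactly that \eqref{a1} holds, the first assertion of the corollary is nothing but the equivalence \eqref{a1} $\iff$ \eqref{z} already established inside the proof of Lemma \ref{lem2}. For the second assertion, I would combine the two observations. By definition, \emph{strongly adapted} means \emph{adapted} together with the extra requirement \eqref{Jc}; but \eqref{Jc} is automatic for $\widetilde{\eta}$ by its very construction, so this extra requirement is vacuous for this particular $G$--connection. Hence, as soon as $\eta$ is adapted to $\widetilde{\eta}$, it is automatically strongly adapted. Putting these together yields the chain $\eta$ strongly adapted to $\widetilde{\eta}$ $\iff$ $\eta$ adapted to $\widetilde{\eta}$ $\iff$ \eqref{z}, the outer two terms of which also recover the statement of Lemma \ref{lem2}.

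I do not anticipate a real obstacle, since the corollary merely isolates pieces already proved; the work is purely organizational. The one point deserving care is to confirm that the equivalence \eqref{a1} $\iff$ \eqref{z} in the proof of Lemma \ref{lem2} is genuinely independent of the hypothesis \eqref{Jc}. Inspecting that argument, the curvature--contraction identity $i_{d'\rho(x,v)}(\mathcal{K}(\eta)(x))$ is used purely to translate the Lie--bracket condition \eqref{a1} into the vanishing statement \eqref{z}, with no appeal to \eqref{Jc}; so the separation is clean and the corollary follows.
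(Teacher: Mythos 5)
Your proposal is correct and matches the paper exactly: the paper derives this corollary with the single remark ``From the proof of Lemma \ref{lem2} we have the following,'' and your argument spells out precisely that extraction --- the equivalence \eqref{a1} $\iff$ \eqref{z} established there is independent of \eqref{Jc}, while \eqref{Jc} holds automatically for $\widetilde{\eta}$ by its construction in \eqref{te}, so adapted implies strongly adapted.
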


Take a $\mathbb C$--linear map
\begin{equation}\label{v1}
\varphi_0\, :\, {\mathfrak g}\, \longrightarrow\, H^0(X,\, \text{ad}(E_H))\, .
\end{equation}
For any $v\, \in\, \mathfrak g$, the section $\varphi_0(v)\, \in\, H^0(X,\,
\text{ad}(E_H))$ defines a holomorphic vertical tangent vector field on
$E_H$ for the projection $p$. This vertical tangent vector field on $E_H$
will be denoted by ${\varphi}(v)$. Let $U\, \subset\, X$ be an open subset
and $V$ a $C^\infty$ vector field on $U$ of type $(1,\, 0)$.
Let $V'\,=\, \eta(V)$ be the horizontal lift of
$V$ on $p^{-1}(U)$ for the holomorphic connection $\eta$ on $E_H$.
Let $f_0$ be any $C^\infty$
function on $U$. Then $V'(f_0\circ p)$ is a $H$--invariant function on $p^{-1}(U)$, and
hence
\begin{equation}\label{Q1}
{\varphi}(v)(V'(f_0\circ p))\,=\, 0\, .
\end{equation}
On the other hand,
\begin{equation}\label{Q2}
{\varphi}(v)(f_0\circ p)\,=\, 0
\end{equation}
because ${\varphi}(v)$ is a vertical
vector field. From \eqref{Q1} and \eqref{Q2} we conclude that
$$
[{\varphi}(v),\, V'](f_0\circ p)\,=\, 0\, .
$$
In other words,
\begin{equation}\label{ev}
[{\varphi}(v),\, V']\,=\, [{\varphi}(v),\, V']_{\rm Vert}\, ,
\end{equation}
where $[{\varphi}(v),\, V']_{\rm Vert}$ is the vertical component of
$[{\varphi}(v),\, V']$.
The vector field $[{\varphi}(v),\, V']$ is
$H$--invariant because both ${\varphi}(v)$ and $V'$ are $H$--invariant.
If $f_1$ is a $C^\infty$ function on $U$, then note that
$$
[{\varphi}(v),\, (f_1\circ p)\cdot V']\,=\, (f_1\circ p)\cdot
[{\varphi}(v),\, V']
$$
because ${\varphi}(v)(f_1\circ p)\,=\, 0$. Clearly, the vector field $(f_1\circ
p)\cdot V'$ is the horizontal lift of the vector field $f_1\cdot V$ on $U$ for the
connection $\eta$. From these observations we conclude that there is a homomorphism
\begin{equation}\label{v2}
\widetilde{\varphi}\, :\, {\mathfrak g}\otimes_{\mathbb C} TX\, \longrightarrow\,
\text{ad}(E_H)
\end{equation}
that sends $v\otimes w\,\in\, {\mathfrak g}\otimes T_xX$ to
$[{\varphi}(v),\, V'](x)$, where $V'\,=\, \eta(V)$ is the horizontal lift, with
respect to the connection $\eta$, of a vector field $V$ defined on a neighborhood of the
point $x\,\in\, X$ with $V(x)\,=\, w$. Note that $[{\varphi}(v),\, V'](x)$ does
not depend on the choice of the extension $V$ of $w$.

The contraction in \eqref{v3} produces a homomorphism
\begin{equation}\label{Pi}
\Pi\, :\, {\mathfrak g}\otimes_{\mathbb C} TX\, \longrightarrow\,
\text{ad}(E_H)
\end{equation}
that sends $v\otimes w\,\in\, {\mathfrak g}\otimes T_xX$ to
$$
i_w i_{d'\rho(x,v)}({\mathcal K}(\eta)(x))\, \in\,
\text{ad}(E_H)_x\, ,
$$
which is the contraction of $i_{d'\rho(x,v)}({\mathcal K}(\eta)(x))\, \in\,
(T^*X)_x\otimes \text{ad}(E_H)_x$ (see \eqref{dpr}, \eqref{v3}) by the
tangent vector $w\,\in\, T_xX$.

\begin{theorem}\label{thm1}
Let $X$ be a complex manifold equipped with a holomorphic action of $G$ and
$E_H$ a holomorphic principal $H$--bundle on $X$ equipped with a holomorphic
connection $\eta$. Then there is a $G$--connection $h$ on $E_H$ such that $\eta$
is adapted to $h$ if and only if there is a homomorphism $\varphi_0$ as in \eqref{v1}
such that the homomorphism $\widetilde{\varphi}$ in \eqref{v2} coincides with
the homomorphism $-\Pi$, where $\Pi$ is constructed in \eqref{Pi}.
\end{theorem}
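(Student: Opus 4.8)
The plan is to parametrize all $G$--connections on $E_H$ relative to the distinguished one $\widetilde\eta$ built from $\eta$, and then to read off the adaptedness condition from a single curvature computation. Since $\widetilde\eta$ is itself a $G$--connection (a splitting of $q$ in \eqref{e9}), every $G$--connection $h$ has the form $h\,=\,\widetilde\eta+\psi$ with $\psi\, :\, X\times\mathfrak g\,\longrightarrow\,\text{ad}(E_H)$ a homomorphism: indeed $q\circ(h-\widetilde\eta)\,=\,0$, so $h-\widetilde\eta$ factors through the kernel $\text{ad}(E_H)$ of $q$. Such a $\psi$ is exactly the data of a $\mathbb C$--linear map $\varphi_0\, :\, \mathfrak g\,\longrightarrow\, H^0(X,\,\text{ad}(E_H))$ as in \eqref{v1}, via $\psi(x,v)\,=\,\varphi_0(v)(x)$. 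This sets up a bijection between $G$--connections $h$ and maps $\varphi_0$, and both implications of the theorem will follow from one equivalence: ``$\eta$ is adapted to $h\,=\,\widetilde\eta+\psi$'' holds if and only if ``$\widetilde\varphi\,=\,-\Pi$''.

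First I would compute the $H$--invariant vector field $J\circ h(\cdot,v)$ attached to $h\,=\,\widetilde\eta+\psi$. Using the left vertical identification in \eqref{e9}, so that $J$ restricts to the inclusion $\text{ad}(E_H)\hookrightarrow\text{At}(E_H)$, together with the definition of $\widetilde\eta$ in \eqref{te}, one gets $J\circ h(\cdot,v)\,=\,\eta(d'\rho(\cdot,v))+\varphi(v)$, where $\varphi(v)$ is the vertical vector field attached to $\varphi_0(v)$ defined just after \eqref{v1}. The adaptedness condition \eqref{a1} then demands that $[\eta(d'\rho(\cdot,v))+\varphi(v),\, \eta(W)]$ be horizontal for every local vector field $W$ on $X$; since $\eta(TX)$ is spanned by such lifts, this is equivalent to the vanishing of the vertical component of this bracket.

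The key step is to split the bracket into its two summands and identify each vertical part. For the second summand, \eqref{ev} already shows that $[\varphi(v),\, \eta(W)]$ is vertical, and by the definition \eqref{v2} it equals $\widetilde\varphi(v\otimes W)$ pointwise. For the first summand, both $\eta(d'\rho(\cdot,v))$ and $\eta(W)$ are horizontal lifts, so I would invoke the curvature description recalled in the proof of Lemma \ref{lem2}: the vertical part of the bracket of two horizontal lifts is the curvature evaluated on the corresponding vector fields downstairs. Hence the vertical part of $[\eta(d'\rho(\cdot,v)),\, \eta(W)]$ is $i_W\, i_{d'\rho(\cdot,v)}(\mathcal K(\eta))$, which is exactly $\Pi(v\otimes W)$ by \eqref{Pi}. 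Adding the two contributions, the vertical part of the full bracket is $\Pi(v\otimes W)+\widetilde\varphi(v\otimes W)$.

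Therefore $\eta$ is adapted to $h$ precisely when $\Pi(v\otimes W)+\widetilde\varphi(v\otimes W)\,=\,0$ for all $v\in\mathfrak g$ and all local $W$, i.e. when $\widetilde\varphi\,=\,-\Pi$. Reading this equivalence in both directions — from an adapted $h$ one extracts $\varphi_0$ with $\widetilde\varphi\,=\,-\Pi$, and conversely any such $\varphi_0$ produces $h\,=\,\widetilde\eta+\psi$ to which $\eta$ is adapted — yields the theorem. The main obstacle, and the only place with genuine content, is the identification of the vertical component of $[\eta(d'\rho(\cdot,v)),\, \eta(W)]$ with the curvature contraction $\Pi(v\otimes W)$; the rest is torsor bookkeeping and the already-established formula \eqref{ev}. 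One should also take care that the curvature identity is applied to $d'\rho(\cdot,v)$, which is merely a holomorphic vector field on $X$ and not a coordinate field, so that the extension-independence noted after \eqref{v2} is what legitimizes evaluating everything pointwise.
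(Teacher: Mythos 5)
Your proposal is correct and takes essentially the same route as the paper: you parametrize $G$--connections as $h \,=\, \widetilde\eta + \varphi_0$ (equivalently, $\varphi_0(v) \,=\, J\circ h(v) - \eta(v')$), use \eqref{ev} to see that $[\varphi(v),\, \eta(W)]$ is vertical and equals $\widetilde\varphi(v\otimes W)$, and identify the vertical part of $[\eta(v'),\, \eta(W)]$ with the curvature contraction $\Pi(v\otimes W)$ — precisely the ingredients of the paper's argument. The only difference is organizational: you prove one biconditional, while the paper handles the two implications separately and, in its forward direction, additionally chooses an extension $V$ with $[v',\, V]\,=\,0$; your uniform appeal to the curvature identity (which is exactly what the paper does in its converse direction) makes that auxiliary choice unnecessary.
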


\begin{proof}
Let $h\, :\, {\mathfrak g}\, \longrightarrow\,
H^0(X,\, \text{At}_\rho(E_H))$ be a $G$--connection
on $E_H$ such that $\eta$ is adapted to $h$. For any $v\, \in\, \mathfrak g$,
consider $$J\circ h(v) - \eta(v')\, \in\, H^0(X,\, \text{At}(E_H))\, ,$$
where $J$ is the homomorphism in \eqref{e9} and $v'$ is the holomorphic vector field on 
$X$ defined by $x\, \longmapsto\,d'\rho(x,v)$
(see \eqref{dpr}). Note that $dp\circ J\circ h(v)\,=\, v'$, where $dp$ is the homomorphism
in \eqref{e2}. Therefore, we have
$$
J\circ h(v) - \eta(v')\, \in\, H^0(X,\, \text{ad}(E_H))\, \subset\,
H^0(X,\, \text{At}(E_H))
$$
(see \eqref{e9}). Now define
$$
\varphi_0\, :\, {\mathfrak g}\, \longrightarrow\, H^0(X,\, \text{ad}(E_H))\, ,\ \
v\,\longmapsto\, J\circ h(v) - \eta(v')\, .
$$
We will show that the homomorphism $\widetilde{\varphi}$ in \eqref{v2} for this
$\varphi_0$ coincides with the homomorphism $-\Pi$.

Take any $v\, \in\, \mathfrak g$. Given any $x\, \in\, X$ and any $w\, \in\, T_xX$,
let $V$ be any $C^\infty$ vector field of type $(1,\, 0)$,
defined on an open neighborhood of $x\, \in\, X$, such that
$$
[v',\, V]\, =\, 0\, .
$$

Since $\eta$ is adapted to $h$, the Lie bracket $[J\circ h(v),\, \eta(V)]$ lies in the horizontal 
subbundle $\eta(TX)\,\subset\, TE_H$. In other words, the vertical component of $[J\circ h(v),\, 
\eta(V)]$ vanishes identically.

The Lie bracket $[\eta(v'),\, \eta(V)]$ is vertical because
$dp([\eta(v'),\, \eta(V)])\,=\, [v',\, V]\,=\,0$. From \eqref{ev} we know that
the Lie bracket $[{\varphi}(v),\, \eta(V)]$ is vertical, where
${\varphi}(v)$ is the vertical vector field corresponding to $\varphi_0(v)
\,\in\, H^0(X,\, \text{ad}(E_H))$. This and the
fact that $[\eta(v'),\, \eta(V)]$ is vertical together imply that
\begin{equation}\label{s1}
[{\varphi}(v)+\eta(v'),\, \eta(V)]\,=\, [J\circ h(v),\, \eta(V)]
\end{equation}
is vertical. But it was shown above that the vertical
component of $[J\circ h(v),\, \eta(V)]$ vanishes identically. Hence we conclude that
$$
[J\circ h(v),\, \eta(V)]\,=\, 0\, .
$$
Consequently, we have
\begin{equation}\label{ii}
[{\varphi}(v),\, \eta(V)]\,=\,- [\eta(v'),\, \eta(V)]
\end{equation}
for all $v\, \in\, \mathfrak g$. Since $[{\varphi}(v),\, \eta(V)]\,=\,
\widetilde{\varphi}(v\otimes V)$ and $[\eta(v'),\, \eta(V)]\,=\,
\Pi(v\otimes V)$, from \eqref{ii} it follows that 
$$
\widetilde{\varphi}\,=\, -\Pi\, .
$$

To prove the converse, take any homomorphism $\varphi_0$ as in \eqref{v1} such that
\begin{equation}\label{iii}
\widetilde{\varphi}\,=\, -\Pi\, .
\end{equation}
Now define a $G$--connection
$$
h\, :\, {\mathfrak g}\, \longrightarrow\, H^0(X,\, \text{At}_\rho(E_H))\, ,
v\, \longmapsto\, (\varphi_0(v)+\eta(v'),\, X\times\{v\})\, .
$$
We will show that $\eta$ is adapted to $h$.

Let $V$ be a $C^\infty$ vector field of type $(1,\, 0)$ defined on an
open subset $U\,\subset\, X$. Take any $v\, \in\,\mathfrak g$. The Lie
bracket $[{\varphi}(v),\, \eta(V)]$ is vertical (see \eqref{ev}), where
$\varphi(v)$, as before, is the vertical vector field for the projection $p$ corresponding 
to the section ${\varphi}_0(v)$ of $\text{ad}(E_H)$. 
We have
$$
\widetilde{\varphi}(v\otimes V)\,=\, [{\varphi}(v),\, \eta(V)]\, ,
$$
and $\Pi(v\otimes V)$ is the vertical component of $[\eta(v'),\, \eta(V)]$. Consequently,
from \eqref{iii} and the definition of $h$ it follows that the vertical component of
$[J\circ h(v),\, \eta(V)]$ vanishes. This implies that $\eta$ is adapted to $h$.
\end{proof}

Let $h\,:\, {\mathfrak g}\, \longrightarrow\, H^0(X,\, \text{At}_{\rho}(E_H))$ be a
$G$--connection on $E_H$. Take any section
$$
\theta\, \in\, C^\infty(X, \, \text{At}(E_H)^{\otimes a}\otimes (\text{At}(E_H)^*)^{\otimes
b})\, ,
$$
where $a$ and $b$ are nonnegative integers. 
Note that $\theta$ defines a $H$--invariant section of the vector bundle
$(TE_H)^{\otimes a}\otimes (T^*E_H)^{\otimes b}$ on $E_H$; this section of
$(TE_H)^{\otimes a}\otimes (T^*E_H)^{\otimes b}$ will be denoted by $\Theta$.
We say that $\theta$ is preserved by $h$ if
$$
L_{J\circ h(v)} \Theta\, =\, 0 \ \ \ \forall \ \ v\, \in\, \mathfrak g\, ,
$$
where $L_{J\circ h(v)}$ is the Lie derivative with respect to the vector
field $J\circ h(v)$ on $E_H$ (the homomorphism $J$ is constructed
in \eqref{e9}).

If $h$ is the $G$--connection associated to a $G$--action $\rho_E$ on $E_H$, then it
is straight-forward to check that $\theta$ is preserved by $h$ if and only if
the section $\Theta$ is preserved by the action $\rho_E$ on $E_H$.

\section{Holomorphic foliations and strongly adapted connections}\label{se-f}

As before, $X$ is a complex manifold. Let
$$
{\mathcal F}\, \subset\, TX
$$
be a holomorphic foliation on $X$, which means that $\mathcal F$ is a holomorphic
subbundle of $TX$ such that for any two sections $s$ and $t$ of $\mathcal F$ defined over
some open subset of $X$, the Lie bracket $[s,\, t]$ is also a section 
of $\mathcal F$ \cite{La}. Let $E_H$ be a holomorphic principal $H$--bundle on $X$.

Consider the Atiyah exact sequence for $E_H$ in \eqref{e2}. Define
$$
\text{At}_{\mathcal F}(E_H)\, :=\, (dp)^{-1}({\mathcal F})\, \subset\,
\text{At}(E_H)\, .
$$
So, from \eqref{e2} we have the short exact sequence of holomorphic vector bundles
\begin{equation}\label{e2p}
0\, \longrightarrow\, \text{ad}(E_H)\, \longrightarrow\, \text{At}_{\mathcal F}(E_H)\,
\stackrel{\widetilde{dp}}{\longrightarrow} \, {\mathcal F} \, \longrightarrow\, 0\, ,
\end{equation}
where $\widetilde{dp}$ is the restriction of $dp$ to $\text{At}_{\mathcal F}(E_H)$.
A \textit{holomorphic partial connection} on $E_H$ is a homomorphism
$$
D\, :\, {\mathcal F}\, \longrightarrow\,\text{At}_{\mathcal F}(E_H)
$$
such that $\widetilde{dp}\circ D\,=\, \text{Id}_{\mathcal F}$ \cite{La}.

Given such a holomorphic partial connection $D$, the homomorphism
$$
\bigwedge\nolimits^2 {\mathcal F}\, \longrightarrow\,\text{ad}(E_H)\, ,\ \
v\otimes w - w\otimes v \, \longmapsto\, 2([D(v),\, D(w)]- D([v,\, w]))\, ,
$$
where $v$ and $w$ are locally defined holomorphic sections of ${\mathcal F}$, produces a
holomorphic section of $(\bigwedge^2 {\mathcal F}^*)\otimes \text{ad}(E_H)$. This
holomorphic section of $(\bigwedge^2 {\mathcal F}^*)\otimes \text{ad}(E_H)$ is called the
\textit{curvature} of the partial connection $D$. A holomorphic partial connection
is called \textit{flat} if its curvature vanishes identically.

Let $\eta\, :\, TX\, \longrightarrow\,\text{At}(E_H)$ be a holomorphic connection
on the principal $H$--bundle $E_H$. As before, the curvature of $\eta$ will be denoted
by ${\mathcal K}(\eta)$. 
Let $D\, :\, {\mathcal F}\, \longrightarrow\,\text{At}_{\mathcal F}(E_H)$ be a
flat holomorphic partial connection on $E_H$. 

The connection $\eta$ is said to be \textit{strongly adapted} to $D$ if
\begin{itemize}
\item the restriction $\eta\vert_{\mathcal F}\, :\, {\mathcal F}\, \longrightarrow\,
\text{At}(E_H)$ coincides with $D$, and

\item for any $x\,\in\, X$ and $w\, \in\, {\mathcal F}_x$, the contraction
$$i_w {\mathcal K}(\eta)(x)\,\in\, T^*_xX\otimes \text{ad}(E_H)_x$$ vanishes.
\end{itemize}

\begin{corollary}\label{cor3}
Suppose that $\mathcal F$ is given by a holomorphic action $\rho$ of a connected 
complex Lie group $G$ on $X$ (so the leaves of $\mathcal F$ are the orbits of $G$), 
and also assume that $D$ is given by a $G$--action $\rho_E$ on $E_H$ (so the tangent 
spaces to the leaves in $E_H$ are the horizontal subspaces). Then $\eta$ is strongly 
adapted to $D$ if and only if $\eta$ is strongly adapted to the $G$--connection on 
$E_H$ given by $\rho_E$.
\end{corollary}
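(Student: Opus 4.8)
The plan is to show that the two notions of ``strongly adapted'' coincide by matching up their defining conditions one at a time, using the dictionary between the foliation data $(\mathcal F,D)$ and the $G$--connection data $(\rho,\rho_E)$ supplied by the hypotheses. Recall that the $G$--connection associated to $\rho_E$ is the homomorphism $h_0$ of \eqref{h0}, and that the leaves of $\mathcal F$ being the $G$--orbits means precisely that $\mathcal F\,=\,d'\rho(X\times{\mathfrak g})\,\subset\, TX$, while $D$ being induced by $\rho_E$ means that $\text{image}(J\circ h_0)$ coincides with $\text{image}(D)\,\subset\,\text{At}(E_H)$ (the horizontal lift of a tangent-to-leaf vector is the corresponding $G$--orbit direction in $E_H$).

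First I would unwind both sides into two conditions apiece. On the foliation side, $\eta$ strongly adapted to $D$ means (i) $\eta\vert_{\mathcal F}\,=\,D$, and (ii) $i_w{\mathcal K}(\eta)(x)\,=\,0$ for all $x\in X$ and $w\in{\mathcal F}_x$. On the $G$--connection side, $\eta$ strongly adapted to $h_0$ means, by definition, the adaptedness condition \eqref{a1} together with the inclusion \eqref{Jc}, $\text{image}(J\circ h_0)\subset\text{image}(\eta)$. The core of the argument is to identify foliation-condition (i) with \eqref{Jc}, and foliation-condition (ii) with the curvature-vanishing \eqref{z} that, via Lemma \ref{lem2} and the remark that $h_0$ is the connection $\widetilde{\eta}$ in the flat case, is equivalent to adaptedness.

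For the first matching, note that $\text{image}(J\circ h_0)$ is the $G$--orbit distribution in $E_H$, i.e.\ the horizontal lift $D(\mathcal F)$; so $\text{image}(J\circ h_0)\subset\text{image}(\eta)$ holds exactly when, for each $w\in{\mathcal F}_x$, the horizontal lift $\eta(w)$ agrees with the orbit direction $D(w)$, which is precisely $\eta\vert_{\mathcal F}\,=\,D$. For the second matching, since $\mathcal F\,=\,d'\rho(X\times{\mathfrak g})$, the condition $i_w{\mathcal K}(\eta)\,=\,0$ for all $w\in{\mathcal F}_x$ is the same as $i_{d'\rho(x,v)}{\mathcal K}(\eta)(x)\,=\,0$ for all $v\in{\mathfrak g}$, which is \eqref{z}; by Corollary \ref{cor1} this is equivalent to $\eta$ being adapted to the flat $G$--connection $h_0$ (here I would observe that, because $D$ is flat, the $G$--connection $h_0$ induced by $\rho_E$ is the one built from $\eta$ in \eqref{te}, so Lemma \ref{lem2} and Corollary \ref{cor1} apply verbatim). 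Combining the two equivalences yields the corollary.

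The main obstacle I anticipate is the bookkeeping in the first matching: carefully verifying that $\text{image}(J\circ h_0)$ really is the horizontal lift $D(\mathcal F)$ and not merely a distribution with the same image in $TX$. This requires tracing the construction of $h_0$ from $\rho_E$ through the commutative diagram \eqref{e9} and confirming that $J\circ h_0(X\times\{v\})$ is the $H$--invariant vector field $\widetilde{h}(\cdot,v)\,=\,d\rho_E(e,\cdot)(v,0)$ on $E_H$, whose descent is exactly the leaf-tangent direction that defines $D$. Once this identification is pinned down the rest is a direct translation, so I would spend most of the write-up making that identification precise and treat the curvature equivalence as an immediate appeal to Corollary \ref{cor1}.
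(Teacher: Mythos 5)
Your overall architecture (matching $\eta\vert_{\mathcal F}\,=\,D$ with \eqref{Jc}, and the curvature condition with adaptedness via Corollary \ref{cor1}) is the right one, and your first matching is correct: since $dp\circ D\,=\,\mathrm{Id}_{\mathcal F}$ and $dp\circ\eta\,=\,\mathrm{Id}_{TX}$, the inclusion $\mathrm{image}(J\circ h_0)\,\subset\,\mathrm{image}(\eta)$ does force $\eta(w)\,=\,D(w)$ for every $w\,\in\,{\mathcal F}_x$. The genuine error is in your justification of the second matching: you claim that ``because $D$ is flat, the $G$--connection $h_0$ induced by $\rho_E$ is the one built from $\eta$ in \eqref{te}.'' This is false. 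The connection $\widetilde\eta$ of \eqref{te} satisfies $J\circ\widetilde\eta(x,v)\,=\,\eta(d'\rho(x,v))$, i.e., it lifts the infinitesimal $G$--action horizontally \emph{with respect to} $\eta$, whereas $J\circ h_0(x,v)$ is the orbit direction $d\rho_E(e,\cdot)(v,0)$, determined by $\rho_E$ alone. The two coincide precisely when the orbit directions in $E_H$ are $\eta$--horizontal, i.e., precisely when $\eta\vert_{\mathcal F}\,=\,D$ --- which is your condition (i), not a consequence of flatness. Flatness of $D$ is a property of $\rho_E$ alone (it holds automatically for a partial connection induced by a $G$--action, just as the curvature of $h_0$ vanishes) and carries no information about $\eta$. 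For any $\eta$ with $\eta\vert_{\mathcal F}\,\neq\,D$ one has $h_0\,\neq\,\widetilde\eta$, and then ``$\eta$ adapted to $h_0$'' is \emph{not} equivalent to \eqref{z}: the discrepancy $\varphi_0(v)\,=\,J\circ h_0(v)-\eta(v')$ enters the bracket computation, which is exactly the general situation handled by Theorem \ref{thm1}.

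Consequently your two matchings are not independent, and ``combining the two equivalences'' as written is not valid: the second equivalence (curvature condition $\Leftrightarrow$ \eqref{a1} for $h_0$) holds only \emph{given} the first. The repair is small and turns your argument into the paper's proof. First establish, as you do, that \eqref{Jc} is equivalent to $\eta\vert_{\mathcal F}\,=\,D$, and observe that this condition is in turn equivalent to the identity $h_0\,=\,\widetilde\eta$ (both say $J\circ h_0(x,v)\,=\,\eta(d'\rho(x,v))$ for all $x$ and $v$). Then, under this common hypothesis, Corollary \ref{cor1} applied to $\widetilde\eta\,=\,h_0$ gives the equivalence of \eqref{a1} with \eqref{z}, which by your (correct) observation ${\mathcal F}\,=\,d'\rho(X\times{\mathfrak g})$ is the second foliation condition. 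Both directions of the corollary then go through, since in the converse direction \eqref{Jc} is part of strong adaptedness to $h_0$, so the identification $h_0\,=\,\widetilde\eta$ is again available. This is precisely the paper's argument, which consists of the single observation that $\eta\vert_{\mathcal F}\,=\,D$ is equivalent to $\widetilde\eta\,=\,h_0$, followed by an appeal to Lemma \ref{lem2}.
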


\begin{proof}
The above condition that $\eta\vert_{\mathcal F}\,=\, D$ is equivalent to the
condition that the $G$--connection $\widetilde\eta$ constructed in \eqref{te}
from $\eta$ coincides with the $G$--connection on
$E_H$ given by the above $G$--action $\rho_E$. Therefore, the result
follows from Lemma \ref{lem2}.
\end{proof}


\end{document}